\theoremstyle{plain}
\newtheorem{prop}{Proposition}[section]
\newtheorem{thm}[prop]{Theorem}
\newtheorem{lemma}[prop]{Lemma}
\newtheorem{coro}[prop]{Corollary}
\theoremstyle{definition}
\newtheorem{defi}[prop]{Definition}
\newtheorem{rema}[prop]{Remark}
\newtheorem{ex}[prop]{Example}
\DeclareSymbolFont{cyrletters}{OT2}{wncyr}{m}{n}
\DeclareMathSymbol{\Sha}{\mathalpha}{cyrletters}{"58}
\title{Cubic surface bundles and the Brauer group}
\author{Alena Pirutka}
\begin{document}

\maketitle

\begin{abstract}
In this note we define a subgroup $H^i_{nr,\pi}$ of unramified cohomology group $H^i_{nr}$ of a fibration $\pi:X\to S$. This subgroup can be used efficiently in refined specialization arguments and allows to 
detect the failure of stable rationality of a variety specializing to $X$. We compute  $H^2_{nr, \pi}$ systematically for many cubic surface bundles $\pi:X\to S$ over a smooth projective rational surface over an algebraically closed field: we give a combinatorial formula in terms of components of the discriminant divisor of $\pi$.
\end{abstract}

\section{Introduction}
Let $k$ be an algebraically closed field  and let $X$ be a projective variety over $k$. Recall that $X$ is {\it rational} if it is birational to a projective space $\mathbb P^n_k$, and that $X$ is {\it stably rational} if $X\times \mathbb P^m_k$ is rational, for some $m$. In recent years there has been a lot of progress understanding these properties, namely, for various classes of algebraic varieties  it was established that a  very general variety in this class is not stably rational. This includes a large class of hypersurfaces, cyclic covers, complete intersections, fibrations in conics, hypersurfaces in $\mathbb P^n_k\times \mathbb P^m_k$, and other examples (some of these results are in \cite{ABP,AO,B1,ABP,CTP,HKT,HPT1,SWM,NO,O,T,Sch2,Sch1,Sch,V1}).

These results were obtained using  specialization techniques  (see \cite{V1, CTP, T, Sch1,Sch}) to degenerate the variety of interest  to some mildly singular {\it reference} variety $X$: a general idea is that if $X$ has suitable nontrivial birational invariants, then the variety of interest is not stably rational.  In applications, one uses unramified cohomology groups $H^i_{nr}$, properties of differential forms in positive characteristic, or other invariants. 

In practice, both finding degenerations and reference varieties proved to be difficult.  Some reference varieties (in particular the quadric surface bundle in \cite{HPT1} and more recently quadric bundles over $\mathbb P^n$ in \cite{Sch1}) were reused to establish the failure of  stable rationality of different classes of varieties. 

Several known examples of reference varieties come with a fibration structure: one uses fibrations in conics and  quadric surfaces, fibrations in quadrics with generic fibre a Pfister (or a Pfister neighbor) quadric or a Fermat-Pfister form. 

 In this note we introduce the {\em relative unramified cohomology group} $$H^i_{nr,\pi}(k(X)/k, \mu_n^{\otimes j})$$ for a fibration $\pi:X\to S$,  where $S$ is a smooth and projective variety over $k$, and $\pi$ is a projective morphism (see Definition \ref{hnrpi} for details) with integral generic fibre.
 The definition of this group combines the well-known general method of Colliot-Th\'el\`ene and Ojanguren \cite{CTO} to construct unramified classes for fibrations with recent refinement  of Schreieder \cite{Sch2, Sch1, Sch}, which allows to avoid any analysis of singularities of $X$ in specialization arguments at the cost of additional restrictions on unramified classes.  We use slightly stronger restrictions for the group $H^i_{nr,\pi}$, which are however often satisfied in practice (for example, in \cite{HPT1}). 
In particular, if the generic fibre of $\pi$ is smooth, and if the group  $H^i_{nr,\pi}$ is nontrivial, we show that $X$  is a reference variety (no restrictions on singularities of $X$ are needed).

As an example, we consider hypersurfaces of bidegree $(2,3)$ in $\mathbb P^2_k\times \mathbb P^3_k$, where the fibration $\pi$ is defined by the projection on $\mathbb P^2_k$. These cubic surface bundles were studied in \cite{ABP}.  In particular,  we know that the only possible nontrivial classes in $H^2_{nr,\pi}$ are $3$-torsion classes.  In addition, some fibres of $\pi$  must be unions of three planes permuted cyclically  by Galois. 
Here we extend this discussion: let $K$ be the field of functions of $S$ and let $X_K$ be the generic fibre of $\pi$. If $x\in S$ is a point of $S$, distinct from the generic point, let $K_x$ be the field of fractions of the completed local ring $\widehat{O}_{S,x}$ of $S$ at $x$. Instead of analyzing  singular fibres of $\pi$ we are reduced to understanding the minimality of the cubic surface, over the set $\{K_x\}_{x\in S}$ of overfields of $K$.  

This approach allows us to find some concrete reference varieties, for example   (see
\ref{refv}):
 $X\subset \mathbb P^2_{[x:y:z]}\times \mathbb P^3_{[u:v:w:t]}$ a cubic surface bundle over an algebraically closed field $k$ given by equation
$$
xz^2u^3+y^2zv^3+xy^2w^3+ft^3=0,
$$
where $f=x^3+y^3+z^3+3x^2y+3xy^2+3y^2z+3yz^2+3xz^2+3x^2z.$

\

More generally, we determine the group
$$
H^2_{nr,\pi}(k(X)/k, \mathbb Z/3\mathbb Z)
$$
  for a large class of cubic surface bundles $\pi:X\to S$
 (see Theorem \ref{formula}) in terms of a combinatorial formula on particular points of $S$. More precisely,  we consider $x\in S$ such that over the field $K_x$ the surface $X_{K_x}$ is birational to a Severi-Brauer surface.  This extends previous examples of complete combinatorial formulas for unramified cohomology of fibrations in conics, quadric surfaces, involution surface bundles, and Brauer-Severi bundles   \cite{CT15,KT1,KT2,P}.

The paper is organized as follows: in section \ref{sstrategy} we review the general strategy to construct unramified classes for fibrations and we apply it to cubic surface bundles,  in section \ref{sff} we discuss the example above, and in section \ref{sf} we give the general formula.

\subsubsection*{Notations and reminders.} If $k$ is a field, we denote by $k^*$ the set of nonzero elements of $k$. We denote by $\zeta_n$ a primitive $n^{\mathrm{th}}$ root of unity. We assume that $n$ is invertible on $k$.

\paragraph{Galois cohomology.} We denote by $\mu_{n}$ the \'etale $k$-group scheme of the $n^{th}$ roots of unity. For $j$ a positive integer we denote $\mu_{n}^{\otimes j}=\mu_{n}\otimes\ldots\otimes\mu_{n}$ ($j$ times).  If $j<0$, we set $\mu_{n}^{\otimes j}=Hom_{k-gr}(\mu_{n}^{\otimes (-j)}, \mathbb Z/n)$ and $\mu_{n}^{\otimes 0}=\mathbb Z/n$.  If $\zeta_n\in k$  we have an isomorphism   $\mu_{n}^{\otimes j}\stackrel{\sim}{\to}\mathbb{Z}/n$ for any~$j$. 

We denote by $H^i(k, \mu_n^{\otimes j})$ the Galois cohomology groups. Recall that the Kummer theory gives isomorphisms $H^1(k, \mu_n)=k^*/k^{*n}$ and $H^2(k, \mu_n)=\mathrm{Br}\,k[n]$, the $n$-torsion in the Brauer group of $k$. Let $K/k$ be a cyclic Galois extension with Galois group $G=\mathbb Z/n\mathbb Z$. One has $H^1(G, \mathbb Z/n\mathbb Z)=\mathrm{Ker}[H^1(k, \mathbb Z/n\mathbb Z)\to H^1(K, \mathbb Z/n\mathbb Z)]$, and one  associates to the extension $K/k$  a class $[K/k]\in H^1(k, \mathbb Z/n\mathbb Z)$, corresponding to the generator of the group  $H^1(G, \mathbb Z/n\mathbb Z)=Hom(G,\mathbb Z/n\mathbb Z)=\mathbb Z/n\mathbb Z$.

Recall that a Severi-Brauer variety over $k$ is a smooth and projective variety $X$ over $k$, such that over the algebraic closure $\bar k$ of $k$ one has an isomorphism $X_{\bar k}\simeq \mathbb P^n_{\bar k}$.   One associates to $X$ a class $[X]\in \mathrm{Br}\, k$, such that $[X]=0$ if and only if $X$ splits over $k$: $X\simeq \mathbb P^n_{k}$.  Here we will be interested in Severi-Brauer surfaces $S$, one then has $[S]\in H^2(k, \mu_3)$. 

\paragraph{Residues.} If $K$ is the field of fractions of a discrete valuation ring $A$ 
one defines the residue maps  
\begin{equation}\label{defres}
\partial_v^i: H^i(K, \mu_{n}^{\otimes j})\to H^{i-1}(\kappa(v), \mu_{n}^{\otimes (j-1)}),
\end{equation}
where $\kappa(v)$ is the residue field.
If $A_v$ is the completion of  $A$ and $K_v$ is the field of fractions of $A_v$,  then the residue map factorizes through $K_v$.

In this text we only consider discrete valuations (of rank one). We will use notations $\partial^i_v$ or $\partial_v$ for the residue map. 

In degree one we have: if $a\in H^1(K, \mu_n)=K^*/K^{*n}$, then $$\partial^1_v(a)=v(a)\mbox{ mod } n.$$ In degree two, if $(a,b):=a\cup b\in H^2(K, \mu_n)$,then  
\begin{equation}\label{resdeg2}\partial^2_v(a, b)=(-1)^{v(a)v(b)}\overline{\frac{a^{v(b)}}{b^{v(a)}}},
\end{equation} 
where $\overline{\frac{a^{v(b)}}{b^{v(a)}}}$ is the image of the unit $\frac{a^{v(b)}}{b^{v(a)}}$ in $\kappa(v)^*/\kappa(v)^{*n}$.

We refer to \cite{CTO,P} and references therein for more details. We will use the following exact sequence, where $i>0$ (see \cite{CT}, (3.10)):
\begin{equation}\label{gersten}
0\to H^i_{\acute{e}t}(A, \mu_{n}^{\otimes j}))\to H^i(K, \mu_{n}^{\otimes j}))\stackrel{\partial}\to H^{i-1}(\kappa(v), \mu_{n}^{\otimes (j-1)}))\to 0.
\end{equation}

\paragraph{Unramified cohomology.} Let $K=k(X)$ be the function field of an integral algebraic variety $X$. The unramified cohomology groups of $X$ are defined as 
$$H_{nr}^i(k(X)/k, \mu_n^{\otimes j})=\bigcap\limits_v\mathrm{Ker}[H^i(k(X), \mu_n^{\otimes j})\stackrel{\partial_{v}^i}{\to}H^{i-1}(\kappa(v), \mu_n^{\otimes j-1})],$$ where the intersection is over all discrete valuations $v$ on  $k(X)$ (of rank one), trivial on the field $k$.
If $v$ corresponds to a codimension $1$ point $x$ of $X$ or an irreducible divisor $D\subset X$ we will also write $\partial_D$  or $\partial_x$ for the residue map $\partial_v$.

If $X$ is an algebraic variety over a field $k$, the Brauer group of $X$ is defined by $\mathrm{Br}\, X=H^2_{\acute{e}t}(X, \mathbb G_m)$. If $X$ is smooth,  the natural map 
\begin{equation}\label{injbr}
\mathrm{Br}\,X\to \mathrm{Br}\, k(X)
\end{equation}
is injective (see \cite{GB} 1.10).
If $X$ is smooth and projective, and if $n$ is invertible in $k$, one has an isomorphism $\mathrm{Br}\,X[n]\simeq H_{nr}^2(k(X)/k, \mu_n)$.

\paragraph{Acknowledgements.} This work was partially supported by NSF grant DMS-2201195. The author would like to thank J.-L. Colliot-Th\'el\`ene for several discussions and comments on the manuscript, and A. Auel for helpful discussions.

\section{The strategy for cubic surface bundles}\label{sstrategy}

\subsection{Cubic surface bundles}
Let $S$ be an integral scheme. Following \cite[Section 3]{ABP},
we say that $\pi:X\to S$ is a {\it cubic surface bundle} if $\pi$ is a flat projective morphism, the generic fibre $X_K$ of $\pi$ is a smooth cubic surface, and  $\pi_*\omega_{X/S}^{\vee}=E$ is a rank $4$ locally free $\mathcal O_S$-module such that $X\subset \mathbb P(E)$ is defined by the vanishing of a global section of $S^3(E^{\vee})\otimes L$ for some line bundle $L$ on $S$. 

We call {\it discriminant divisor} the locus $\Delta\subset S$  over which the fibres of $\pi$ are singular. Following \cite[Theorem 9]{ABP}, we will be interested in points $b\in S^{(1)}$ in $\Delta$ such that $X_b$ is irreducible over $\kappa(b)$, union of three planes over $\overline{\kappa(b)}$ permuted cyclically by the Galois group of a cyclic extension of $\kappa(b)$ of degree $3$.

\subsection{Strategy}\label{sstr}
In this paragraph we briefly review the general method to construct unramified classes for fibrations and we apply it to cubic surface bundles $\pi:X\to S$. This strategy was introduced by Colliot-Th\'el\`ene and Ojanguren in \cite{CTO} and it was used recently to obtain formulas for the unramified Brauer group for fibrations in conics and quadric surfaces over  rational surfaces (see \cite{ABBP, CT15,P}), and to construct more unramified classes for fibrations in higher-dimensional quadrics and in Fermat-Pfister forms (see  \cite{Sch2, Sch1, Sch}). See also \cite{KT1, KT2} for general formulas for fibrations in Severi-Brauer surfaces or involution surface bundles.

Let $k$ be an algebraically closed field such that $n$ is invertible in $k$.  We assume that $S$ is a smooth projective surface over $k$. We also assume that $S$ is  rational: this property is needed in step $3$ below.
Let $K$ be the field of functions of $S$ and let $X_K$ be the generic fibre of a cubic surface bundle $\pi:X\to S$.  Since $\mu_n\subset k$  we will identify $\mu_n$ and $\mathbb Z/n\mathbb Z$ as Galois modules.   The strategy is the following.

\begin{enumerate}
\item As in \cite{CTO}, the starting point is to look at classes in $\mathrm{Br}\,K$ which become unramified in the field of functions $k(X)=K(X_K)$: 
we  are interested in the intersection
$$\mathrm{Im}[\mathrm{Br}\,K\to \mathrm{Br}\,k(X)]\bigcap H^2_{nr}(k(X)/k, \mathbb Z/n\mathbb Z).$$
Following \cite{ABP}, for cubic surface bundles we are interested in the case $n=3$, and in the $3$-torsion in the Brauer group.

\item Recall that if $v$ is a valuation on $k(X)$ with the valuation ring $A\subset k(X)$, the {\it center} of $v$ on $S$ is defined as the image  $x_v\in S$ of the closed point of $Spec \,A$ under the composition of the morphism $Spec \,A\to X$ with $\pi:X\to S$, so that one has a homomorphism of local rings $\mathcal{O}_{S,x_v}\to A$.

Let $k(X)_v$ be the completion of the field $k(X)$ at $v$. Our crucial point is to make a stronger assumption for valuations $v$ such that the center $x_v$ of $v$ is not the generic point of $S$. Denote by $K_{x_v}$ the field of fractions of the completed local ring $\widehat{O}_{S,x_v}$. We will look at classes $\alpha$ in $H^2(K,  \mathbb Z/3\mathbb Z)$ such that the image of $\alpha$ in  $K_{x_v}$  lies in the kernel of the map
$$H^2(K_{x_v},  \mathbb Z/3\mathbb Z) \to H^2(K_{x_v}(X_K), \mathbb Z/3\mathbb Z).$$
This assumption is stronger than the assumption $\partial_v(\alpha)=0$. Indeed, 
one has the following commutative diagram of field extensions:

$$
\begin{tikzcd}
K(X_K) \arrow[hook]{r}&  K_{x_v}(X_K)\arrow[hook]{r} & K(X_K)_v=k(X)_v\\
K\arrow[hook]{r}\arrow[hook]{u}& K_{x_v}.\arrow[hook]{u} &
\end{tikzcd}
$$

Since the residue map (\ref{defres}) factorizes through $k(X)_v$,  one deduces  $\partial_v(\alpha)=0$.

This additional assumption is crucial in order to use the refinement introduced by Schreieder (see e.g. \cite{Sch, Sch1}) which allows to avoid any analysis of singularities in the specialization arguments.

\item  Let $\alpha\in H^2(K,  \mathbb Z/3\mathbb Z)$. Since $S$ is a rational surface, the class $\alpha$ is uniquely determined by its residues $\{\partial_x(\alpha)\}_{x\in S^{(1)}}$ at codimension $1$ points of $S$  (see for example \cite[Thm.1]{AM}). In order to obtain the general formula, we need to determine for which points $x=x_v$ the residue of $\alpha$ at $v$ is allowed to be nonzero.
As described above, the key ingredient is to understand the kernel 
$$\mathrm{Ker} [H^2(F,   \mathbb Z/3\mathbb Z) \to H^2(F(X_K), \mathbb Z/3\mathbb Z)]$$ for the following fields: \begin{itemize}
\item  for $F=K$ the function field of $S$, which will give us the image of $H^2(K,  \mathbb Z/3\mathbb Z)$ in $H^2(k(X), \mathbb Z/3\mathbb Z)$ in the first step above;
\item  for $F=K_x$ the field of fractions of the completed local ring $\widehat{O}_{S,x}$  at points $x=x_v$ of $S$ of codimension $1$ or $2$, which will allow us to achieve the second step. 
\end{itemize}
For fibrations in quadrics or Fermat-Pfister forms this analysis is done using the Milnor $K$-theory or Arason-Pfister theory and properties of quadratic forms. For cubic surfaces a more geometric approach apear to be working better. Following \cite{CTKM}, the properties of interest are: if the cubic surface is minimal or if it is $F$-birational to a Severi-Brauer surface (see Theorem \ref{formula} below).
\end{enumerate}

\subsection{Relative unramified cohomology}

Given the above strategy, we introduce the following notation:

\begin{defi}\label{hnrpi}
Let $k$ be a field and let $n>0$ be an integer invertible on $k$.  Let $S$ be a smooth projective integral variety over $k$ and let $K$ be the field of functions of $S$.  Let $X$ be an integral projective variety and let $\pi:X\to S$ be a dominant morphism, let $X_K$ be the generic fibre of $\pi$.
We define the group 
$$H^i_{nr, \pi}(k(X)/k, \mu_n^{\otimes j})\subset H^i(k(X), \mu_n^{\otimes j})$$
as the following intersection 
\begin{multline*}
H^i_{nr, \pi}(k(X)/k, \mu_n^{\otimes j})= \\
=\mathrm{Im} [H^i(K, \mu_n^{\otimes j})\to H^i(k(X), \mu_n^{\otimes j})]\bigcap \cap_x \mathrm{Ker}[H^i(k(X), \mu_n^{\otimes j})\to H^i(K_x(X_K), \mu_n^{\otimes j})],
\end{multline*}
 where $x$ runs over all scheme points of $S$ of positive codimension: $x\in S^{(i)}\mbox{ for }i>0$,  and where $K_x$ is the field of fractions of the completed local ring $\widehat{O}_{S,x}$  at $x$.

\end{defi}

\begin{rema}
From the definition, one has:
\begin{equation}\label{incl}
H^i_{nr, \pi}(k(X)/k, \mu_n^{\otimes j})\subset H^i_{nr}(k(X)/k, \mu_n^{\otimes j}).
\end{equation}
Indeed, let $\alpha\in H^i_{nr, \pi}(k(X)/k, \mu_n^{\otimes j})$ and let $v$ be a discrete valuation on $k(X)$. If $v$ is trivial on $K$, then $\partial_v(\alpha)=0$ since $\alpha$ is in the image of the group $H^i(K, \mu_n^{\otimes j})$. Otherwise, let $x_v\in S$ be the center of $v$. As in section \ref{sstr} above,  since $K_{x_v}(X_K)$ is a subfield of the completion $k(X)_v$, and since the residue map $\partial_v$ factorizes through the completion, one deduces $\partial_v(\alpha)=0$ again.
\end{rema}

The definition of the group $H^i_{nr, \pi}$ is inspired by the refined specialization method \cite{Sch1}: in particular, we show below that the classes in $H^i_{nr, \pi}$  satisfy the assumption of the following result of Schreieder.

\begin{prop}\label{Schref} (\cite[Proposition 3.1]{Sch1})
Let $X$ be a proper geometrically integral variety over a field $L$ which degenerates to a proper variety $Y$ over an algebraically closed field $k$. Let $\ell$ be a prime different from $char(k)$ and let $\tau:Y'\to Y$ be an alteration whose degree is prime to $\ell$. Suppose that for some $i\geq 1$ there is a nontrivial class $\alpha\in H^i_{nr}(k(Y/k, \mathbb Z/\ell\mathbb Z)$ such that
$$(\tau^*\alpha)|_E = 0\in H^i(k(E), \mathbb Z/\ell\mathbb Z)\mbox{ for any subvariety }E\subset \tau^{-1}(Y^{sing}),$$ where $Y^{sing}$ denotes the singular locus of $Y$.
Then $X$ is not stably rational over $F$.
\end{prop}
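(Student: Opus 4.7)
My approach would be the standard degeneration / decomposition-of-the-diagonal method of Voisin and Colliot-Th\'el\`ene--Pirutka, incorporating Schreieder's refinement: the vanishing hypothesis on $\tau^{*}\alpha|_E$ is used in place of any assumption about the singularities of $Y$ (e.g.\ the existence of a universally $CH_{0}$-trivial resolution). I assume for contradiction that $X/L$ is stably rational and aim to derive $\alpha=0$.

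First I would spread out the degeneration to a flat proper family $\mathcal X\to B$ over the spectrum of a DVR (or a smooth curve over $k$) with closed fibre $Y$ and geometric generic fibre $X_{\eta}$ that inherits stable rationality. After a finite base change, stable rationality of $X_{\eta}$ gives universal $CH_{0}$-triviality, and hence a Bloch--Srinivas decomposition of the diagonal
$$\Delta_{X_\eta}=X_\eta\times\{p\}+Z \ \in\ \mathrm{CH}_{\dim X}(X_\eta\times X_\eta),$$
with $Z$ supported on $X_\eta\times D$ for a proper closed $D\subsetneq X_\eta$. I would then spread this equality out to $\mathcal X\times_B\mathcal X$ (replacing $\mathcal X$ by a regular alteration if necessary so that intersection theory is available on the total space) and specialize via Fulton's specialization of Chow groups to the closed fibre. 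The resulting equality in $\mathrm{CH}(Y\times_k Y)$ has the shape "point-class $+$ cycle supported on $Y\times D_Y$ $+$ error term supported on $Y^{\mathrm{sing}}\times Y$", where the error term accounts for the non-smoothness of the closed fibre. Pulling back along the alteration $\tau\colon Y'\to Y$ gives an analogous decomposition of $(\deg\tau)\cdot\Delta_{Y'}$ in $\mathrm{CH}(Y'\times Y')$ modulo cycles supported on $\tau^{-1}(Y^{\mathrm{sing}})\times Y'$.

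Next I would let this decomposition act on $\tau^{*}\alpha\in H^{i}_{nr}(k(Y')/k,\mathbb Z/\ell)$ through the standard action of Chow correspondences on unramified cohomology of smooth projective varieties (Rost--Merkurjev formalism). The diagonal acts as the identity; the point-class piece annihilates positive-degree classes; the piece supported on $Y'\times\tau^{-1}(D_Y)$ would produce an unramified class on $Y'$ supported on a proper closed subvariety, which must therefore vanish. Finally, the singular error term, supported on $\tau^{-1}(Y^{\mathrm{sing}})\times Y'$, acts as zero on $\tau^{*}\alpha$ \emph{exactly} because of the hypothesis $(\tau^{*}\alpha)|_E=0$ for every subvariety $E\subset\tau^{-1}(Y^{\mathrm{sing}})$. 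Summing up yields $(\deg\tau)\cdot\tau^{*}\alpha=0$ in $H^{i}(k(Y'),\mathbb Z/\ell)$, hence $\tau^{*}\alpha=0$ since $\deg\tau$ is a unit modulo $\ell$; applying $\tau_{*}$ and using the projection formula $\tau_{*}\tau^{*}=\deg(\tau)\cdot\mathrm{id}$ then forces $\alpha=0$, contradicting nontriviality.

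The main obstacle is the specialization step when $Y$ is singular: one must certify that Fulton's specialization, applied after a regular alteration of $\mathcal X$, produces a decomposition of $\Delta_Y$ whose "extra" components (those not of the expected point-plus-divisor shape) are genuinely supported on $Y^{\mathrm{sing}}\times Y$ and its flip, and one must verify that the correspondence action on unramified cohomology is compatible with such specialization. Schreieder's observation is precisely that, once one works with a regular total space and tracks the supports carefully, any stray component necessarily lands in the preimage of the singular locus of $Y$ -- which is exactly what makes the hypothesis on $\tau^{*}\alpha|_E$ sufficient to kill all the residual contributions.
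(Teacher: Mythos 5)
The paper does not prove this statement: it is quoted verbatim from Schreieder \cite[Proposition 3.1]{Sch1}, so there is no internal proof to compare against. Your outline does follow the strategy of Schreieder's actual argument --- decomposition of the diagonal from stable rationality, Fulton specialization to the closed fibre, transfer to the alteration $Y'$, action of correspondences on unramified cohomology, with the hypothesis $(\tau^*\alpha)|_E=0$ used to annihilate the contributions over the singular locus --- and the closing steps (the point class kills positive-degree classes, a correspondence supported over a proper closed subset dies at the generic point, and $\deg\tau$ being prime to $\ell$ lets you descend from $\tau^*\alpha=0$ to $\alpha=0$ via restriction--corestriction) are all correct. As a roadmap this is the right proof.

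There are, however, two inaccuracies at the one step that actually carries the content of the proposition. First, Fulton's specialization does not produce an ``error term supported on $Y^{\mathrm{sing}}\times Y$'': specializing $\Delta_{X_\eta}=X_\eta\times p+Z$ yields an honest decomposition $\Delta_Y=Y\times z_0+Z_0$ in $\mathrm{CH}_*(Y\times_k Y)$ with $Z_0$ supported on $D_0\times Y$ for a proper closed subset $D_0\subsetneq Y$ (and no regular alteration of the total space is needed for the specialization map, which exists for any scheme over a DVR). The terms supported over $\tau^{-1}(Y^{\mathrm{sing}})$ arise only afterwards: one cannot pull cycles back along $\tau\times\tau$ to the singular target $Y\times Y$, nor let a correspondence on $Y\times Y$ act on unramified cohomology of a singular variety, so one must compare $(\deg\tau)\cdot\Delta_{Y'}$ with a cycle on $Y'\times Y'$ built from the decomposition of $\Delta_Y$ by working over the locus where $Y$ is smooth and taking closures, and then certify that the resulting discrepancy is supported over $\tau^{-1}(Y^{\mathrm{sing}})$ (the extra components of $Y'\times_Y Y'$ over the smooth locus are absorbed into $\tau^*\tau_*$ and cause no harm). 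That certification is precisely Schreieder's refinement and is the step your sketch asserts rather than proves; everything else in your plan is the classical Voisin/Colliot-Th\'el\`ene--Pirutka argument. If you want to write this out in full, it is cleaner to follow \cite{Sch1} and phrase the decomposition as an identity of zero-cycles $\delta_Y=z$ in $\mathrm{CH}_0\bigl(Y\times_k k(Y)\bigr)$, lift it to $\mathrm{CH}_0\bigl(Y'\times_k k(Y')\bigr)$ up to cycles supported on $\tau^{-1}(Y^{\mathrm{sing}})$, and pair with $\tau^*\alpha$, rather than manipulating correspondences on the singular product.
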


\begin{rema}
As in \cite[section 2.6]{Sch1} we say that $X$ degenerates to $X_0$ if there is a flat proper scheme $\mathcal X$ over a discrete valuation ring with the field of fractions $L$ and the residue field $k$, and an inclusion $L\subset F$  such that one has isomorphisms $X\simeq \mathcal X_{L}\times F$ and $X_0\simeq  \mathcal X_k$. 
\end{rema}

We deduce that a variety over an algebraically closed field with nonzero group $H^i_{nr, \pi}$ is a reference variety:

\begin{prop}\label{lrefv}
Let $k$ be an algebraically closed field and let $S$ be a smooth projective  integral variety over $k$. Let $X_0$ be an integral projective variety and let $\pi:X_0\to S$ be a dominant map with  smooth generic fibre.
Assume that there is an integer $i>0,$ and a prime number $\ell\neq char(k)$, such that
$$H^i_{nr, \pi}(k(X_0)/k, \mathbb Z/\ell\mathbb Z)\neq 0.$$
Let $X$ be a proper geometrically integral variety over a field $F$ which degenerates to $X_0$. Then $X$ is not stably rational over $F$.
\end{prop}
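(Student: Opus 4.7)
The plan is to verify the hypothesis of Proposition~\ref{Schref} for any nontrivial $\alpha \in H^i_{nr,\pi}(k(X_0)/k, \mathbb{Z}/\ell\mathbb{Z})$. By the inclusion (\ref{incl}), $\alpha$ lies in $H^i_{nr}(k(X_0)/k, \mathbb{Z}/\ell\mathbb{Z})$, and by Definition~\ref{hnrpi} one may write $\alpha = \pi^*\beta$ for some $\beta \in H^i(K, \mathbb{Z}/\ell\mathbb{Z})$ with $K = k(S)$. Fix an alteration $\tau: Y' \to X_0$ of degree prime to $\ell$; by de Jong we may take $Y'$ to be smooth over $k$. The remaining task is to show $(\tau^*\alpha)|_E = 0$ in $H^i(k(E), \mathbb{Z}/\ell\mathbb{Z})$ for every integral subvariety $E \subseteq \tau^{-1}(X_0^{sing})$, after which Proposition~\ref{Schref} yields the conclusion.

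The crucial geometric observation is that, since $\pi$ has smooth generic fibre, $\pi(X_0^{sing})$ is a proper closed subset of $S$, so the image of $E$ under $\pi \circ \tau$ is contained in a proper closed subset of $S$. Let $x \in S$ denote its generic point: then $x$ has positive codimension in $S$ and therefore belongs to the index set in Definition~\ref{hnrpi}. Let $\eta_E$ be the generic point of $E$ and $A = \mathcal{O}_{Y',\eta_E}$, a regular local ring (since $Y'$ is smooth) with residue field $k(E)$ and fraction field $k(Y')$. The morphism $\pi \circ \tau$ sends $\eta_E$ to $x$, so there is a local homomorphism $\mathcal{O}_{S,x} \to A$; functoriality of completion gives $\widehat{\mathcal{O}}_{S,x} \to \widehat{A}$, and hence compatible field embeddings $K_x \hookrightarrow \mathrm{Frac}(\widehat{A})$ and $k(X_0) \hookrightarrow k(Y') = \mathrm{Frac}(A) \hookrightarrow \mathrm{Frac}(\widehat{A})$. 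Inside $\mathrm{Frac}(\widehat{A})$ the compositum of these two subfields is $K_x(X_K)$.

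By definition of $H^i_{nr,\pi}$, $\alpha$ maps to zero in $H^i(K_x(X_K), \mathbb{Z}/\ell\mathbb{Z})$, hence $\tau^*\alpha$ vanishes in $H^i(\mathrm{Frac}(\widehat{A}), \mathbb{Z}/\ell\mathbb{Z})$. On the other hand, $\tau^*\alpha$ is unramified on $Y'$ (pull-back of an unramified class to a smooth variety along an alteration), so it extends to an \'etale class on $\mathrm{Spec}\,A$, and further to $\mathrm{Spec}\,\widehat{A}$. Since $\widehat{A}$ is regular, the injection $H^i_{\mathrm{\acute{e}t}}(\widehat{A}, \mathbb{Z}/\ell\mathbb{Z}) \hookrightarrow H^i(\mathrm{Frac}(\widehat{A}), \mathbb{Z}/\ell\mathbb{Z})$ (iterated Gersten sequences (\ref{gersten}), or Bloch--Ogus purity) forces this \'etale extension to be zero, so its restriction to the residue field $k(E)$, which is $(\tau^*\alpha)|_E$, vanishes.

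The main subtlety is the compatibility of the two inclusions $K \hookrightarrow \mathrm{Frac}(\widehat{A})$ in paragraph two: the path through $K \hookrightarrow k(X_0) \hookrightarrow k(Y')$ and the path through $K \hookrightarrow K_x$ must agree, so that $\beta$ has a single image in $H^i(\mathrm{Frac}(\widehat{A}), \mathbb{Z}/\ell\mathbb{Z})$ that both vanishes by hypothesis and controls the restriction to $k(E)$. This compatibility reduces to the functoriality of completion applied to $\mathcal{O}_{S,x} \to A$. The regularity of $Y'$ furnished by de Jong is what allows the purity step at the end.
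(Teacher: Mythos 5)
Your argument is correct and follows the same strategy as the paper: verify Schreieder's criterion (Proposition~\ref{Schref}) by observing that any $E\subset\tau^{-1}(X_0^{sing})$ fails to dominate $S$ because the generic fibre is smooth, so its image has a center $x$ of positive codimension, and then use the vanishing of $\alpha$ in $H^i(K_x(X_K),\mathbb Z/\ell\mathbb Z)$ built into Definition~\ref{hnrpi} together with the injectivity of \'etale cohomology of a complete regular local ring into that of its fraction field. The one place you diverge is technical: the paper first blows up to reduce to the case where $E$ is a divisor, so that only the discrete-valuation-ring sequence~(\ref{gersten}) is needed and $(\tau^*\alpha)|_E$ is read off from the completed valuation ring $A_v$, whereas you work directly at the (possibly higher-codimension) generic point of $E$ and invoke Bloch--Ogus/Gersten purity for the regular local ring $\mathcal O_{Y',\eta_E}$ and its completion; this is a legitimate but heavier input, and one should note that it matches Schreieder's definition of the restriction $(\tau^*\alpha)|_E$, which is exactly what the paper's divisor reduction is arranging.
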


\begin{proof} Let $\alpha$ be a nonzero element in $H^i_{nr, \pi}(k(X_0)/k, \mathbb Z/\ell\mathbb Z)$. Let $K$ be the function field of $S$. Let $X_0^{sing}$ be the singular locus of $X_0$. We apply Proposition \ref{Schref}:  we show that for any alteration $\tau:X'\to X_0$ of degree prime to $\ell$, and for any
subvariety $E\subset \tau^{-1}(X_0^{sing})$ one has that the restriction of $\tau^*\alpha$ to $E$ is zero: $$\tau^*\alpha|_E=0\mbox{ in }H^i(k(E),\mathbb Z/\ell\mathbb Z).$$  

Note that $E$ does not dominate $S$ since the generic fibre $X_{0,K}$ of $\pi$ is smooth. In addition, up to blowing up $E$, one can assume that $E$ is a divisor (see e.g. the proof of  \cite[Proposition 5.1]{Sch1}). Let $v$ be the induced valuation on $X'$ and let $x_v$ be its center on $S$.  Let $A_v$ be the completion of the valuation ring of $v$. 

Since $\alpha$ is unramified, its image $\alpha_v$ in $H^i(k(X')_v, \mathbb Z/\ell\mathbb Z)$ comes from $$H^i_{\acute{e}t}(A_v, \mathbb Z/\ell\mathbb Z)\subset  H^i_{\acute{e}t}(k(X')_v, \mathbb Z/\ell\mathbb Z)$$ (see (\ref{gersten})). Recall that one then defines $\tau^*\alpha|_E$ as the image of $\alpha_v$ under the isomorphism $H^i_{\acute{e}t}(A_v, \mathbb Z/\ell\mathbb Z)\simeq H^i(k(E), \mathbb Z/\ell\mathbb Z)$.

By the definition of $H^i_{nr, \pi}$ we know that the image of $\alpha$ 
in $H^i(K_{x_v}(X_{0,K}),\mathbb Z/\ell\mathbb Z)$ is zero. 
Since $K_{x_v}(X_{0,K})\subset k(X')_v$, one deduces that $\alpha_v$ is the image of $\alpha$ by the composition  
$$H^i(k(X), \mathbb Z/\ell\mathbb Z)\to  H^i(K_{x_v}(X_K), \mathbb Z/\ell\mathbb Z)\to H^i(k(X')_v, \mathbb Z/\ell\mathbb Z),
$$
so that $\alpha_v=0$, hence $\tau^*\alpha|_E=0$ as well.
\end{proof}

\section{Function fields of cubic surfaces and example}\label{sff}
In this section we implement the first step of our strategy for cubic surface bundles (see section \ref{sstr}). 
Let $K$ be a field of characteristic $char(K)\neq 3$. 
Let $Y$ be a smooth projective cubic surface over $K$.
 We are interested in $3$-torsion elements in the group $\mathrm{Br}\,K[3]=H^2(K, \mu_3)$ which vanish  over $K(Y)$.

\begin{prop}\label{relbrc}
Let $K$ be a perfect field such that  $char(K)\neq 3$. Let $Y/K$ be a smooth projective cubic surface. Then the following are equivalent:
\begin{itemize}
\item [(i)]
$\mathrm{Ker} [H^2(K, \mu_3)\to H^2(K(Y), \mu_3)]\neq 0$;
\item [(ii)] there is a birational morphism $Y\to Y'$ such that $Y'$ is a non-split Severi-Brauer surface.
\end{itemize}
If these equivalent conditions hold, then
$$\mathrm{Ker} [H^2(K, \mu_3)\to H^2(K(Y), \mu_3)]\simeq \mathbb Z/3\mathbb Z,$$
generated by the class of $Y'$.
\end{prop}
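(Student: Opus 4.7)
The direction (ii) $\Rightarrow$ (i), together with the ``moreover'' statement, is an immediate consequence of Amitsur's theorem on Severi--Brauer varieties. Since $Y$ and $Y'$ are $K$-birational, $K(Y) = K(Y')$, and Amitsur gives
\[
\mathrm{Ker}[\mathrm{Br}(K) \to \mathrm{Br}(K(Y'))] \;=\; \langle [Y'] \rangle \;\subset\; \mathrm{Br}(K),
\]
cyclic of order equal to the index of $Y'$. For $Y'$ a non-split Severi--Brauer surface this order is exactly $3$, so the kernel is $\mathbb{Z}/3\mathbb{Z}$, generated by $[Y'] \in H^2(K, \mu_3)$.

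For the converse (i) $\Rightarrow$ (ii), fix a nonzero class $\alpha \in \mathrm{Ker}[H^2(K, \mu_3) \to H^2(K(Y), \mu_3)]$. My plan is to apply the minimal model program for geometrically rational surfaces over the perfect field $K$: by iteratively contracting Galois-stable sets of pairwise disjoint $(-1)$-curves on $Y$, one obtains a birational $K$-morphism $Y \to Y_{\min}$ onto a $K$-minimal smooth projective geometrically rational surface $Y_{\min}$. Since the kernel depends only on the function field, $\alpha$ still vanishes on $K(Y_{\min})$. By the classical Iskovskikh classification, $Y_{\min}$ is either (a) a del Pezzo surface whose geometric Picard lattice has Galois-invariant sublattice of rank $1$, or (b) a $K$-minimal conic bundle over a smooth curve (Picard rank $2$).

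I would then compute this kernel via the Hochschild--Serre spectral sequence for $\mathbb{G}_m$. Using Hilbert~90 and the vanishing of $\mathrm{Br}(Y_{\min} \times_K \bar{K})$ coming from geometric rationality, the five-term exact sequence, together with the injection $\mathrm{Br}(Y_{\min}) \hookrightarrow \mathrm{Br}(K(Y_{\min}))$ from (\ref{injbr}), yields
\[
\mathrm{Ker}[\mathrm{Br}(K) \to \mathrm{Br}(K(Y_{\min}))] \;\cong\; \mathrm{coker}\bigl[\mathrm{Pic}(Y_{\min}) \to \mathrm{Pic}(Y_{\min}\times_K \bar{K})^{G_K}\bigr].
\]
In case (b), this cokernel is $2$-torsion (generated by quaternion classes arising from the degenerate conic fibres), so cannot contain the nontrivial $3$-torsion class $\alpha$; case (b) is excluded. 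In case (a), a case-by-case analysis following \cite{CTKM} shows that the $3$-primary part of the above cokernel is nonzero only when $Y_{\min}$ is a non-split Severi--Brauer surface (degree $9$, index $3$), and in that case it equals $\langle [Y_{\min}] \rangle \cong \mathbb{Z}/3\mathbb{Z}$. This yields (ii) with $Y' = Y_{\min}$ and pins down $\alpha$ as a generator of $\langle [Y'] \rangle$, consistent with the ``moreover'' statement.

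The main obstacle is precisely the case analysis of step~(a): for every minimal del Pezzo surface of degree $d \in \{1,\dots,8\}$ one must verify that the $3$-primary part of the cokernel vanishes. For the genuinely minimal cubic surface ($d=3$) this is elementary, since the canonical class generates both $\mathrm{Pic}(Y_{\min})$ and $\mathrm{Pic}(Y_{\min}\times_K \bar{K})^{G_K}$, but the remaining degrees require the careful Galois-lattice arguments developed in \cite{CTKM} together with the explicit description of the action of $G_K$ on the $(-1)$-curves.
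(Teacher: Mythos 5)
Your argument is correct and ultimately rests on the same ingredient as the paper, whose proof is a direct citation of \cite[Proposition 5.3]{CTKM} (the full determination of $\mathrm{Ker}[\mathrm{Br}\,K\to\mathrm{Br}\,K(Y)]$ for geometrically rational surfaces over a perfect field): your Amitsur argument for (ii)$\Rightarrow$(i) and your minimal-model/Hochschild--Serre reduction for (i)$\Rightarrow$(ii), with the case-by-case analysis deferred to \cite{CTKM}, reproduce exactly the internal structure of that cited result. One small imprecision: Amitsur's theorem gives a cyclic group of order the \emph{period} of $[Y']$ rather than its index, though for a non-split Severi--Brauer surface these coincide and equal $3$.
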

\begin{proof}
Note that we are interested in the kernel  of the map $\mathrm{Br}\,K \to \mathrm{Br}\,K(Y)$, restricted to the $3$-torsion. 
This kernel was fully determined in \cite[Proposition 5.3]{CTKM} for all geometrically rational surfaces over a perfect field. In particular, it is a trivial, a $2$-torsion or a $3$-torsion group. The last case occurs only if $Y$ is not minimal, and there is a $K$-morphism $Y\to Y'$ to a non-split Severi-Brauer surface $Y'$, in which case the kernel is generated by the class of the surface.
\end{proof}

\

\begin{ex} \label{0symbol} 
 We give  two concrete examples:
 \begin{enumerate}
 \item 
 Assume that $\zeta_3\in K$. Let $a,b\in K^*$ and let $Y$ be the  following cubic surface  over $K$:
 \begin{equation}\label{birSB}
au^3+bv^3+abw^3+t^3=0. 
\end{equation}
Then the class  $(a,b)\in H^2(K, \mathbb Z/3\mathbb Z)$ vanishes over $K(Y)$ 
 (see \cite[Lemma 1]{CTKS}).
Indeed, if $a$ is a cube in $K(Y)$, then $(a,b)=0$. Otherwise, let $L=K(Y)(\sqrt[3]{a})$. Since in $K(Y)$ we have a relation
$$b=-\frac{t^3+au^3}{v^3+aw^3},$$ we have that $b=N_{L/K(Y)}(\beta)$ where $\beta=-\frac{t+\sqrt[3]{a}u}{v+\sqrt[3]{a}w}$.
Since  $a$ is a cube in $L$, we have the equality of symbols in $H^2(K(Y), \mathbb Z/3\mathbb Z)$:
$$(a,b)=(a, N_{L/K(Y)}(\beta))=N_{L/K(Y)}(a,\beta)=0,$$
where the second equality is the projection formula.
One could also write equations of $6$ disjoint lines on $Y$ which can be contracted over $K$. In particular, equation (\ref{birSB}) gives a birational model of the Severi-Brauer surface with the class $(a,b)$. 

\item Consider the following cubic surface $Y$: 
 \begin{equation}\label{cubic}
au^3+bv^3+abw^3+ft^3=0,
\end{equation}
where  $a,b,f\in K^*$ are  such that none of the elements $a,b,ab,f, af, bf$ is a cube in $K$. In this case, by Segre's theorem, the surface is minimal (see e.g. \cite[p.110]{M}), hence the map  $H^2(K, \mathbb Z/3\mathbb Z)\to H^2(K(Y), \mathbb Z/3\mathbb Z)$ is injective.
\end{enumerate}
\end{ex}

\

The above example allows to construct a cubic surface bundle which is a reference variety.

\begin{prop}\label{refv}
Let $k$ be an algebraically closed field of $char\, (k)\neq 3$ and let 
 $X\subset \mathbb P^2_{[x:y:z]}\times \mathbb P^3_{[u:v:w:t]}$ be a cubic surface bundle over $k$ given by the following equation
\begin{equation}\label{refcubic}
xz^2u^3+y^2zv^3+xy^2w^3+ft^3=0,
\end{equation}
where $$f=x^3+y^3+z^3+3x^2y+3xy^2+3y^2z+3yz^2+3xz^2+3x^2z,$$ and where $\pi:X\to \mathbb P^2_k$ is the projection on the first factor. Let $K=k(\mathbb P^2)=k(x/z,y/z)$, let $\alpha=(x/z, y/z)\in H^2(K, \mathbb Z/3\mathbb Z)$, and let $\alpha'$ be the image of $\alpha$ in $H^2(k(X), \mathbb Z/3\mathbb Z)$.
Then $\alpha'$ is not zero and  $$\alpha'\in H^2_{nr,\pi}(k(X)/k, \mathbb Z/3\mathbb Z).$$

\end{prop}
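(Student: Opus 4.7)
The plan is to verify the two conditions of Definition \ref{hnrpi}: (i) $\alpha'\ne 0$ in $H^2(k(X), \mathbb Z/3\mathbb Z)$, and (ii) for every $x\in S^{(1)} \cup S^{(2)}$ the class $(a,b)|_{K_x}$ lies in the kernel of $H^2(K_x, \mu_3) \to H^2(K_x(X_K), \mu_3)$. For (i), the class $\alpha = (a,b)$ is already nonzero in $H^2(K,\mu_3)$: its residue along $\{x=0\}$ is the image of $z/y$ in $k(y/z)^*/k(y/z)^{*3}$, which is a non-cube. To promote this to nontriviality of $\alpha'$ over $k(X)$, I appeal to Proposition \ref{relbrc}: it suffices that the generic fibre $X_K$ be minimal. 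Dividing \eqref{refcubic} by $z^3$, $X_K$ is cut out by
\begin{equation*}
au^3 + b^2 v^3 + ab^2 w^3 + f't^3 = 0, \qquad f' := f/z^3,
\end{equation*}
which has the shape treated in Example \ref{0symbol}(2) with Segre parameters $a$ and $b^2$. I would then check that none of $a$, $b^2$, $ab^2$, $f'$, $af'$, $b^2f'$ is a cube in $K$: this is clear for the first three (the associated divisors on $\mathbb P^2$ have nonzero coefficients not divisible by $3$), and for $f'$ and its multiples it follows from $f=(x+y+z)^3-6xyz$ cutting out an irreducible smooth cubic curve (a direct partial-derivative computation shows $V(f)$ has no singular points), so that $[V(f)]$ appears with coefficient $1$ in the divisor of $f'$.

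For (ii), I first dispose of the cases where $(a,b)|_{K_x}$ already vanishes. The ramification divisor of $\alpha$ on $\mathbb P^2$ is $V(xyz)$, so at any $x\in S^{(1)}$ off these three lines, $\alpha|_{K_x}$ lies in $H^2_{\acute{e}t}(\widehat{O}_{S,x}, \mu_3) \simeq H^2(\kappa(x),\mu_3)$, which vanishes since $\kappa(x)$ is a function field of a curve over the algebraically closed $k$. At any closed point $x_0\in S^{(2)}$ lying on at most one of the three coordinate lines $\{x=0\}$, $\{y=0\}$, $\{z=0\}$ (which includes all closed points of $V(f)$), at least one of the local coordinates $a$, $b$ (or their analogues in a suitable affine chart containing $x_0$) is a unit with residue in $k^*=(k^*)^3$; by Hensel's lemma it is a cube in $\widehat{O}_{S,x_0}$, which forces $(a,b)|_{K_{x_0}}=0$.

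The remaining points are the three generic points of the coordinate lines and the three vertices $[0:0:1]$, $[0:1:0]$, $[1:0:0]$. The crucial feature of the chosen polynomial is that $f|_{x=0}=(y+z)^3$, $f|_{y=0}=(x+z)^3$, $f|_{z=0}=(x+y)^3$, and $f$ takes the cube value $1$ at each vertex. Consequently, in each of these six completed local rings the unit obtained from $f'$ (in the appropriate affine chart) reduces to a cube in the residue field, hence is itself a cube in $\widehat{O}_{S,x}$ by Hensel's lemma applied to $T^3-f'$ (the derivative $3$ is a unit in $k$). Absorbing this cube root into the variable $t$, and at the vertices $[0:1:0]$ and $[1:0:0]$ additionally rescaling $w$ by a suitable element of $K_x^*$ to match the $w^3$-coefficient with the product of the $u^3$- and $v^3$-coefficients, brings the cubic surface over $K_x$ into the form $\alpha_0 U^3 + \beta_0 V^3 + \alpha_0\beta_0 W^3 + T^3 = 0$ of Example \ref{0symbol}(1). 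Tracing through the changes of affine coordinates one finds $(\alpha_0,\beta_0) \equiv -(a,b)$ in $K_x^*/K_x^{*3}$ in each case; Example \ref{0symbol}(1) then produces a birational equivalence between $X_{K_x}$ and a non-split Severi--Brauer surface of class $-(a,b)|_{K_x}$, and Proposition \ref{relbrc} places $(a,b)|_{K_x}$ in the desired kernel. The main obstacle is the bookkeeping at the vertices $[0:1:0]$ and $[1:0:0]$, where one must pass to a non-standard affine chart, rescale $w$ by a uniformizing coordinate to absorb a cubic discrepancy in the $w^3$-coefficient, and then trace the resulting symbol back to $-(a,b)|_{K_x}$ via the change of coordinates.
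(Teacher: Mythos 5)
Your argument is essentially the paper's own proof: nontriviality via Segre minimality of the generic fibre (Example \ref{0symbol}(2) and Proposition \ref{relbrc}), vanishing of $\alpha|_{K_x}$ at points meeting at most one coordinate line via Hensel's lemma and purity/cohomological dimension, and at the generic points of the three coordinate lines and the three vertices the reduction of $f$ to a cube and the passage to the form of Example \ref{0symbol}(1) --- you merely make explicit the chart changes and the identification of the dying symbol with $-\alpha=2\alpha$ that the paper leaves implicit. One caveat on your minimality check: the assertion that $V(f)$ is smooth fails in characteristic $2$, where $f=(x+y+z)^3$ is a perfect cube (so $f'$ is a cube in $K$, the Segre criterion does not apply, and in fact the generic fibre is then of the form of Example \ref{0symbol}(1), so the statement itself breaks), and also in characteristic $7$, where $[1:1:1]$ is a singular point of $V(f)$; this issue is inherited from the paper, which asserts minimality without verification, but you should restrict the smoothness claim (or replace it by a direct irreducibility argument) in those characteristics.
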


\begin{proof}
For convenience, we give a direct proof, which does not use the formula in Theorem \ref{formula}. First we observe that $\alpha'$ is nonzero. Indeed, we use Proposition \ref{relbrc} and the second example in \ref{0symbol}: the generic fibre $Y/K$ of $\pi$ is a minimal cubic surface.

Let $x_v\in \mathbb P^2_k$ be a point of positive codimension.  We have three cases:
\begin{enumerate}
\item Assume $x_v$ is the generic point of one of three lines $x=0$, $y=0$, or $z=0$, or an intersection point of two of these lines. Then, by the definition of $f$, we see that $f$ is a nonzero cube in the residue field $\kappa(x_v)$, so that $f$ is a cube in $K_{x_v}$ by the Hensel lemma. We then get that over $K_{x_v}$ the cubic surface $Y_{K_{x_v}}$ is of the first type considered in example \ref{0symbol}, so that the element $(x/z, y^2/z^2)=2\alpha$ is in the kernel of the map 
\begin{equation}
 H^2(K,\mathbb Z/3\mathbb Z) \to H^2(K_{x_v},\mathbb Z/3\mathbb Z)\to H^2(K_{x_v}(Y), \mathbb Z/3\mathbb Z).\label{sfactor}
 \end{equation}
Hence $\alpha'\in \mathrm{Ker}[H^2(k(X), \mathbb Z/3\mathbb Z)\to H^2(K_{x_v}(Y), \mathbb Z/3\mathbb Z)]$.

\

For the remaining cases, by the sequence (\ref{sfactor})  above, it is enough to show that the image of $\alpha$ in $K_{x_v}$ is zero:

\item  Assume $x_v$ is a closed point lying on only one of the lines $x=0$, $y=0$, or $z=0$. By symmetry we may assume that $x_v$ is on the line $x=0$. Then $y/z$ is a nonzero element in the residue field $\kappa(x_v)=k$, hence a cube since $k$ is algebraically closed. Hence $y/z$ is a cube in $K_{x_v}$ and the image of $\alpha$ in $K_{x_v}$ is zero.  
\item In all other cases  the elements $x/z$ and $y/z$ are units in the local ring of $x_v$, so that the image of $\alpha$ in $K_{x_v}$ comes from the cohomology  group 
$H^2_{\acute{e}t}( \widehat{\mathcal O}_{\mathbb P^2, x_v},\mathbb Z/3\mathbb Z)$ of the completed local ring, but this group is zero by cohomological dimension since $H^2_{\acute{e}t}( \widehat{\mathcal O}_{\mathbb P^2, x_v},\mathbb Z/3\mathbb Z)=H^2( \kappa(x_v),\mathbb Z/3\mathbb Z)=0$. 
\end{enumerate}

\end{proof}

\begin{rema}
For the example above one could also use the techniques of the universal relations in Milnor $K$-theory in \cite{Sch}. In particular, \cite[Theorem 5.3]{Sch} applies with $n=2$,  $x_1=x$, $ x_2=y^2$, $a_1=xu^3$, $a_2=y^2(xw^3+v^3)$, $b=-f(x,y,1)$.
\end{rema}

As an application, we recover a result of Krylov-Okada \cite[Theorem 1.2(3)]{KO}
 (see also \cite[Proposition 6.1]{NO} in zero characteristic, using techniques from tropical geometry):

\begin{coro}(\cite{KO, NO})\label{cNO}
Let $k$ be an algebraically closed field of $char\, (k)\neq 3$. A very general hypersurface of bidegree $(3,3)$ in $\mathbb P^2_k\times \mathbb P^3_k$ is not stably rational.
\end{coro}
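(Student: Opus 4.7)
The plan is to apply Proposition \ref{lrefv} to the reference variety from Proposition \ref{refv}, combined with a standard Koll\'ar-style spreading argument on the parameter space of bidegree $(3,3)$ hypersurfaces. Let $T\cong\mathbb{P}^N$ parameterize such hypersurfaces in $\mathbb{P}^2_k\times\mathbb{P}^3_k$, let $\mathcal{X}\to T$ be the universal family, and let $[X_0]\in T(k)$ correspond to the hypersurface (\ref{refcubic}), for which $H^2_{nr,\pi}(k(X_0)/k,\mathbb{Z}/3\mathbb{Z})\neq 0$ by Proposition \ref{refv}.

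First I would produce a one-parameter degeneration to $X_0$. Choose a general line $\ell\subset T$ through $[X_0]$, arranged so that its generic point lies in the smooth locus of $\mathcal{X}\to T$. The completed local ring $R=\widehat{\mathcal{O}}_{\ell,[X_0]}$ is then a DVR with residue field $k$ and fraction field $L\simeq k((t))$, and the pullback of the universal family to $\mathrm{Spec}\,R$ is a flat proper family with special fiber $X_0$ and smooth generic fiber $\mathcal{X}_L$. Applying Proposition \ref{lrefv} to this data gives that $\mathcal{X}_L$ is not stably rational over $L$, and hence $\mathcal{X}_{\eta_\ell}$ is not stably rational over $k(\ell)$.

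The remaining step is the standard very-general upgrade. Using that stable rationality specializes from generic to special fibers in smooth families over a DVR (Koll\'ar, Kontsevich--Tschinkel, Nicaise--Shinder) applied to the local ring $\mathcal{O}_{T,\eta_\ell}$ (whose generic fiber is $\mathcal{X}_{\eta_T}$ and whose special fiber is the non-stably-rational $\mathcal{X}_{\eta_\ell}$, both lying in the smooth locus), the contrapositive yields that $\mathcal{X}_{\eta_T}$ is not stably rational over $k(T)$. A standard spreading-out combined with Koll\'ar's countability statement then shows that the stably rational locus in $T$ is contained in a countable union of proper closed subsets, so a very general bidegree $(3,3)$ hypersurface in $\mathbb{P}^2_k\times\mathbb{P}^3_k$ is not stably rational. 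The main technical subtlety is verifying that the relevant specialization theorem for stable rationality applies in the characteristic of $k$ (arbitrary except $3$); this is where the degeneration framework of Schreieder, being insensitive to characteristic as long as the torsion prime $\ell=3$ is invertible in $k$, is especially useful.
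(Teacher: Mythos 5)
Your overall skeleton is the paper's: the corollary is deduced from Proposition \ref{refv} (the explicit reference variety $X_0$ with $H^2_{nr,\pi}\neq 0$) together with Proposition \ref{lrefv}, plus the standard ``very general'' reduction that the paper leaves implicit. The first half of your argument is fine: pulling the universal family back to $\mathcal{O}_{\ell,[X_0]}$ (or its completion) gives a degeneration of $\mathcal{X}_{\eta_\ell}$ to $X_0$ in exactly the sense of the paper, so Proposition \ref{lrefv} shows $\mathcal{X}_{\eta_\ell}\times_{k(\ell)}F$ is not stably rational for any overfield $F\supset k(\ell)$.

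The gap is in your upgrade from $\mathcal{X}_{\eta_\ell}$ to the very general fiber. You invoke the theorem that stable rationality specializes in smooth proper families over a DVR (Nicaise--Shinder, Kontsevich--Tschinkel). That result is proved in equicharacteristic zero (it rests on weak factorization, resp.\ the motivic volume) and is not available in positive characteristic, whereas the corollary is asserted for every algebraically closed field of characteristic $\neq 3$. Your closing remark that Schreieder's degeneration framework makes this step characteristic-free conflates two different things: Proposition \ref{lrefv} specializes the \emph{obstruction} (a decomposition of the diagonal, handled with alterations, hence valid in any characteristic prime to $\ell=3$); it does not give you the forward implication ``generic fiber stably rational $\Rightarrow$ special fiber stably rational.'' There is also the smaller inaccuracy that $\mathcal{O}_{T,\eta_\ell}$ is a local ring of dimension $N-1$, not a DVR. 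The fix is to never pass through $\mathcal{X}_{\eta_T}$ at all: run the countability argument first. If the stably rational locus in $T(k)$ were not contained in a countable union of proper closed subsets, the Hilbert-scheme parametrization of birational maps $\mathcal{X}_t\times\mathbb{P}^m\dashrightarrow\mathbb{P}^{N+m}$ (valid in any characteristic, $k$ uncountable) produces a generically finite dominant $T'\to T$ over which stable rationality holds for the generic fiber; restricting to a general line $\ell$ through $[X_0]$ whose generic point lifts to $T'$ makes $\mathcal{X}_{\eta_\ell}\times_{k(\ell)}k(\ell')$ stably rational, contradicting what you already proved via Proposition \ref{lrefv}. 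This one application of Proposition \ref{lrefv}, with $F=k(\ell')$ and $L=k(\ell)$, is the entire content of the reduction and avoids any characteristic-zero input.
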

\begin{proof}
This follows from Proposition \ref{lrefv} and Proposition \ref{refv}.
\end{proof}

\section{Local computations and the general formula}\label{sf}

In this section we finish implementing our strategy for a large class of cubic surface bundles $\pi:X\to S$: we analyze the kernels
$\mathrm{Ker}[H^2(K_x,   \mathbb Z/3\mathbb Z) \to H^2(K_x(X_K), \mathbb Z/3\mathbb Z)]$ for $x$ a point of codimension $1$ or $2$ of $S$, and we derive a formula for the group $H^2_{nr, \pi}$.

\subsection{The case of dimension $1$}

In this paragraph we assume that $K_v$ is a complete discretely valued field of equicharacteristic zero and that $\zeta_3\in K$. Let $A_v$ be the valuation ring, and let $\kappa(v)$ be the residue field.

\begin{prop}\label{cdcond1}
  Let $\mathcal Y/A_v$ be a flat cubic surface bundle, such that the generic fibre $Y=\mathcal Y_{K_v}$ is smooth, and the special fibre $Y_0=\mathcal Y_{\kappa(v)}$ is reduced. Assume there is a birational morphism $Y\to Y'$ such that $Y'$ is a non-split Severi-Brauer surface. Let $\alpha\in H^2(K_v, \mathbb Z/3\mathbb Z)$ be its class.  Assume that $cd(\kappa(v))\leq 1$.
Then
\begin{itemize}
\item [(i)] $Y_0$ is geometrically a union of three planes permuted cyclically by Galois, and $\partial_v(\alpha)= \xi$ or $\xi^{-1}$, where $\xi$ is the class of this extension;
\item[(ii)] if $\beta\in H^2(K_v, \mathbb Z/3\mathbb Z)$ is such that $\partial_v(\alpha)= \xi$ or $\xi^{-1}$, then $\beta=\pm \alpha$. 
\end{itemize}
\end{prop}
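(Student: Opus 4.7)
The plan is to first establish that the residue map
$\partial_v\colon H^2(K_v,\mathbb{Z}/3\mathbb{Z})\to H^1(\kappa(v),\mathbb{Z}/3\mathbb{Z})$
is an isomorphism, which makes (ii) automatic once (i) is known, and then to extract the geometric structure of $Y_0$ in (i) by testing the vanishing of $\alpha$ over $K_v(Y)$ against the discrete valuations attached to the irreducible components of $Y_0$. The rest is essentially a degree count using the cubic surface bundle hypothesis.

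For the isomorphism, I would invoke the exact sequence (\ref{gersten}) with $i=2$, $j=1$: for the complete DVR $A_v$ one has $H^2_{\acute{e}t}(A_v,\mu_3)\simeq H^2(\kappa(v),\mathbb{Z}/3\mathbb{Z})$, and this group vanishes under the assumption $cd(\kappa(v))\leq 1$. Hence $\partial_v$ is bijective. Statement (ii) is then immediate: granted (i), one has $\{\xi,\xi^{-1}\}=\{\partial_v(\alpha),\partial_v(-\alpha)\}$, so $\partial_v(\beta)\in\{\xi,\xi^{-1}\}$ forces $\beta=\pm\alpha$ by injectivity.

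For (i), the starting point is that $Y$ being $K_v$-birational to the non-split Severi-Brauer surface $Y'$ gives $K_v(Y)=K_v(Y')$, and $\alpha$ dies in $H^2(K_v(Y'),\mathbb{Z}/3\mathbb{Z})$ because $Y'$ acquires a rational point over its own function field. Since $\alpha\neq 0$, the isomorphism $\partial_v$ then shows $\xi:=\partial_v(\alpha)\neq 0$, corresponding to a cyclic cubic extension $L/\kappa(v)$. For each irreducible component $P$ of the reduced special fibre $Y_0$, the reducedness hypothesis ensures that $\mathcal{O}_{\mathcal{Y},\eta_P}$ is a DVR with uniformizer $\pi$, giving a valuation $v_P$ on $K_v(Y)$ with $e(v_P/v)=1$ and residue field $\kappa(v_P)=\kappa(P)$ (the function field of $P$ over $\kappa(v)$). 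The standard compatibility of residues under field extensions then yields $0=\partial_{v_P}(\alpha|_{K_v(Y)})=\mathrm{res}_{\kappa(P)/\kappa(v)}(\xi)$, so $L$ embeds into the field of constants $c_P$ of $P$; in particular $[c_P:\kappa(v)]\geq 3$, equivalently the geometric base change $P\times_{\kappa(v)}\overline{\kappa(v)}$ has at least three irreducible components.

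To conclude, I would use that $\mathcal{Y}\subset\mathbb{P}(E)$ is a cubic surface bundle, so $Y_0\subset\mathbb{P}^3_{\kappa(v)}$ is a cubic hypersurface of total degree $3$; each geometric irreducible component has degree at least $1$, and by the previous step each $\kappa(v)$-component of $Y_0$ contributes at least three geometric components. The only possibility is that $Y_0$ is $\kappa(v)$-irreducible with exactly three geometric components, each of degree $1$, so $Y_0$ is geometrically a union of three planes, $[c_P:\kappa(v)]=3$, and $c_P=L$. These three planes form a single Galois orbit under $\mathrm{Gal}(L/\kappa(v))\simeq\mathbb{Z}/3\mathbb{Z}$, hence are permuted cyclically, and $\partial_v(\alpha)=\xi$ is, up to the choice of generator of $\mathrm{Gal}(L/\kappa(v))$, exactly the class of this extension. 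The substance of the argument lies in the passage from the vanishing $\mathrm{res}_{\kappa(P)/\kappa(v)}(\xi)=0$ to the splitting of $P$ into three geometric components; this is where the reducedness of $Y_0$ (giving $e(v_P/v)=1$) and the cubic surface bundle hypothesis (capping $\deg Y_0$ at $3$) are both essential.
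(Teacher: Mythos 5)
Your proof is correct, and your treatment of (ii) and of the preliminary reductions coincides with the paper's: injectivity (indeed bijectivity) of $\partial_v$ follows from the sequence (\ref{gersten}) together with $H^2_{\acute{e}t}(A_v,\mathbb Z/3\mathbb Z)\simeq H^2(\kappa(v),\mathbb Z/3\mathbb Z)=0$, and $\alpha$ dies over $K_v(Y)=K_v(Y')$ because that field splits the Severi--Brauer surface $Y'$. The genuine difference is in the geometric heart of (i): at the point where you have established that $\alpha|_{K_v(Y)}=0$, hence that $\alpha$ is unramified along every divisorial valuation of $K_v(Y)$ extending $v$, the paper stops and invokes \cite[Lemma 8, Theorem 9]{ABP}, whereas you unpack that black box. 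Your route --- reducedness plus flatness makes $\mathcal O_{\mathcal Y,\eta_P}$ a discrete valuation ring with $e=1$ over $A_v$ for each irreducible component $P$ of $Y_0$; the compatibility $\partial_{v_P}\circ\mathrm{res}_{K_v(Y)/K_v}=e\cdot\mathrm{res}_{\kappa(P)/\kappa(v)}\circ\partial_v$ forces the cyclic cubic extension attached to $\xi=\partial_v(\alpha)\neq 0$ into the constant field of every component; and the bound $\deg Y_0=3$ in $\mathbb P^3_{\kappa(v)}$ then leaves only one $\kappa(v)$-component, splitting into three conjugate planes --- is a correct, self-contained reconstruction of what the cited result of \cite{ABP} provides. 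What the paper's version buys is brevity; what yours buys is independence from \cite{ABP} and an explicit identification of $\xi^{\pm 1}$ with the class of the splitting field of the three planes, which is exactly the assertion of (i).
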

\begin{proof}
\begin{itemize}
\item [(i)]
Since the cohomological dimension $cd\,(\kappa(v))\leq 1$, the class $\alpha$ is uniquely determined by its residue $\partial_v(\alpha)$. Indeed, from sequence (\ref{gersten}), if  $\partial_v(\alpha_1)=\partial_v(\alpha_2)$, then $\alpha_1-\alpha_2$ comes from $H^2_{\acute{e}t}(A_v, \mathbb Z/3)=H^2(\kappa(v), \mathbb Z/3)=0$. In particular,  since $Y'$ is not split, one has that $\partial_v(\alpha)\neq 0$.

By Proposition \ref{relbrc}, the image $\alpha'$  of $\alpha$ in $K_v(Y)$ is zero; in particular it is unramified with respect to any discrete valuation $w$ on $K_v(Y)$ extending $v$. The conclusion (i) then follows from  \cite[Lemma 8, Theorem 9]{ABP}.
\item [(ii)]
Similarly as above, since $\partial_v(\beta)=\pm \partial_v(\alpha)$, one has $\beta=\pm \alpha$.
\end{itemize}

\end{proof}

\begin{rema}\label{resc}
By Proposition \ref{relbrc}, the assumption that there is a birational morphism $Y\to Y'$ such that $Y'$ is a non-split Severi-Brauer surface is equivalent to the assumption $\mathrm{Ker} [H^2(K_v, \mathbb Z/3\mathbb Z)\to H^2(K_v(Y), \mathbb Z/3\mathbb Z)]\neq 0$.
\end{rema}

\subsection{The case of dimension $2$}

We first recall the local description of Brauer classes on a surface from \cite{S}.
Let $S$ be a smooth projective surface over an algebraically closed field $k$ of characteristic zero. Let $x\in S$ be a closed point, let $A_x=\widehat{O}_{S,x}$ be the completed local ring at $x$, and let $K_x$ be the field of fractions of $A_x$. 

\begin{lemma}\label{hot}
Let $\alpha\in H^2(K_x, \mathbb Z/3\mathbb Z)$. Assume that  $$ram(\alpha)=\{\mathfrak{p}\in \mathrm{Spec}\,A_x \text{ of height }1, \partial_{\mathfrak{p}}(\alpha)\neq 0\}$$ is a simple normal crossings divisor. Then either $\alpha=0$ or 
$\alpha=\pm (\pi_1, \pi_2)$, where $\pi_1, \pi_2$ generate the maximal ideal of $A$.
\end{lemma}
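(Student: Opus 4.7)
The plan is to exploit the Bloch--Ogus/Gersten resolution for $H^2(-,\mu_3)$ on the regular two-dimensional scheme $\mathrm{Spec}\,A_x$, where $A_x \cong k[[\pi_1,\pi_2]]$ since $k$ is algebraically closed of characteristic zero. Because the residue field $k$ has cohomological dimension zero, one has $H^i_{\acute{e}t}(A_x,\mu_3) = H^i(k,\mu_3) = 0$ for every $i\geq 1$, so extending \eqref{gersten} to codimension two yields the exact sequence
$$0 \to H^2(K_x,\mu_3) \xrightarrow{(\partial_{\mathfrak{p}})} \bigoplus_{\mathfrak{p}\in \mathrm{Spec}\,A_x^{(1)}} H^1(\kappa(\mathfrak{p}),\mathbb{Z}/3) \xrightarrow{\sum \partial_{\mathfrak{m}}} H^0(k,\mu_3^{\otimes -1}) \to 0.$$
Thus $\alpha$ is determined by its height-one residues, subject to the single relation that the sum of the second residues at the closed point $\mathfrak{m}$ vanishes.

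I would then argue by cases on the number of irreducible components of $\mathrm{ram}(\alpha)$. If $\mathrm{ram}(\alpha) = \emptyset$, the class is unramified and therefore lies in $H^2_{\acute{e}t}(A_x,\mu_3)=0$. If $\mathrm{ram}(\alpha)$ is a single regular divisor, complete its equation to a regular system of parameters $(\pi_1,\pi_2)$ with the divisor equal to $\{\pi_1=0\}$. Then $\partial_{\pi_1}(\alpha) \in k((\pi_2))^*/k((\pi_2))^{*3}$; since $k$ is algebraically closed of characteristic $\neq 3$, Hensel's lemma makes $k[[\pi_2]]^*$ three-divisible, so this group is $\mathbb{Z}/3$ generated by $[\pi_2]$. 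Writing $\partial_{\pi_1}(\alpha)=n[\pi_2]$, the second residue at $\mathfrak{m}$ equals $n$, and the compatibility forces $n=0$, which contradicts the inclusion of $\{\pi_1=0\}$ in $\mathrm{ram}(\alpha)$; this subcase therefore does not occur.

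In the remaining case, the SNC hypothesis lets us choose regular parameters $\pi_1,\pi_2$ so that the two ramification components are $\{\pi_1=0\}$ and $\{\pi_2=0\}$. The same analysis gives $\partial_{\pi_1}(\alpha)=n_1[\pi_2]$ and $\partial_{\pi_2}(\alpha)=n_2[\pi_1]$ with $n_1,n_2\in\mathbb{Z}/3$, and the compatibility at $\mathfrak{m}$ imposes one linear relation between them. A direct application of \eqref{resdeg2} shows that the symbol $(\pi_1,\pi_2)$ has residues $-[\pi_2]$ at $\pi_1$ and $[\pi_1]$ at $\pi_2$, so choosing $c \in \mathbb{Z}/3$ appropriately makes $\alpha-c(\pi_1,\pi_2)$ have vanishing residues at both $\pi_1$ and $\pi_2$ simultaneously; by Bloch--Ogus, this difference then lies in $H^2_{\acute{e}t}(A_x,\mu_3)=0$. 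Since $c$ is necessarily nonzero in this case and the nonzero elements of $\mathbb{Z}/3$ are $\{1,-1\}$, we conclude $\alpha = \pm(\pi_1,\pi_2)$.

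The step that requires the most care is the Bloch--Ogus compatibility at the closed point: one must know that on the two-dimensional regular complete local ring $A_x$ the displayed sequence is exact in the middle with the expected second-residue differential. This is the Gersten conjecture in this setting (a theorem, by work of Bloch--Ogus and refinements), and it is what makes the reduction to a single symbol possible; the subsequent sign bookkeeping coming from \eqref{resdeg2} is routine, and the key algebraic input is simply the three-divisibility of $k[[t]]^*$ for $k$ algebraically closed of characteristic $\neq 3$.
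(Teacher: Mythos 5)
Your argument is correct, but it takes a genuinely different route from the paper's. The paper deduces the lemma in one line from Saltman's decomposition theorem \cite[Theorem 2.1]{S}, which writes a class with simple normal crossings ramification on a two-dimensional regular local ring as a sum of an unramified class, symbols $(u,\pi_1)$, $(v,\pi_2)$ with $u,v$ units, and a multiple of $(\pi_1,\pi_2)$; the vanishing $H^i_{\acute{e}t}(A_x,\mathbb Z/3\mathbb Z)=H^i(k,\mathbb Z/3\mathbb Z)=0$ for $i>0$ then kills everything except the last symbol. You instead run the Gersten/Bloch--Ogus complex for $A_x$ directly: codimension-one purity together with $H^2_{\acute{e}t}(A_x,\mu_3)=0$ gives injectivity into the sum of first residues, the reciprocity at the closed point rules out a single ramification component and forces $n_2=-n_1$ in the two-component case, and the explicit computation via \eqref{resdeg2} of the residues of $(\pi_1,\pi_2)$ finishes the proof. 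Your route avoids the black box \cite{S} at the cost of invoking the Gersten complex (really only purity and the complex property at the closed point) for the complete local ring $k[[\pi_1,\pi_2]]$ --- an input the paper already uses elsewhere (see \eqref{gersten} and Remark \ref{unicity}) --- so it is arguably more self-contained, while the paper's citation is shorter and reflects a statement valid over more general residue fields before specializing to $k$ algebraically closed. Two points you leave implicit but should state: an SNC divisor in $\mathrm{Spec}\,A_x$ has at most two components, since every height-one prime of $A_x$ contains the maximal ideal, which is why your ``remaining case'' is exactly the two-component one; and the identification $\kappa(\mathfrak{p})^*/(\kappa(\mathfrak{p})^*)^3\cong\mathbb Z/3\mathbb Z$ generated by the class of the transverse parameter uses regularity of the ramification components, again supplied by the SNC hypothesis.
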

\begin{proof}
The statement follows from \cite[Theorem 2.1]{S} and the observation that $H^i_{\acute{e}t}(A_x, \mathbb Z/3\mathbb Z)=H^i(\kappa(x), \mathbb Z/3\mathbb Z)=0$ for $i>0$ since $\kappa(x)=k$ is algebraically closed.
\end{proof}

\begin{prop}\label{cdcond}
 Let $\mathcal Y/A_x$ be a flat cubic surface bundle, such that the generic fibre $Y=\mathcal Y_{K_x}$ is smooth. Assume that the fibres $\mathcal Y_{\mathfrak{p}}$ over height one prime ideals $\mathfrak{p}\in \mathrm{Spec}\,A_x$ are reduced.
   Assume there is a birational morphism $Y\to Y'$ such that $Y'$ is a non-split Severi-Brauer surface. Let $\alpha\in H^2(K_x, \mathbb Z/3\mathbb Z)$ be its class. Let
   $$ram(\alpha)=\{\mathfrak{p}, \partial_{\mathfrak{p}}(\alpha)\neq 0\}$$
be the ramification divisor of $\alpha$. Then:
\begin{itemize}
\item[(i)]   for any $\mathfrak{p}\in ram(\alpha)$  the fibre $\mathcal Y_{\mathfrak{p}}$ is geometrically a union of three planes permuted cyclically by Galois, and  $\partial_v(\alpha)= \xi_{\mathfrak{p}}$ or $\xi_{\mathfrak{p}}^{-1}$, where $\xi_{\mathfrak{p}}$ is the class of this extension;
\item [(ii)] if $ram(\alpha)$ is a simple normal crossings divisor, then $\alpha$ is ramified at exactly two primes $\mathfrak{p}_1, \mathfrak{p}_2$.
\end{itemize}
\end{prop}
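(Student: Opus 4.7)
The plan is to reduce (i) to the one-dimensional case treated in Proposition \ref{cdcond1} by completing at each ramification prime, and to derive (ii) from Lemma \ref{hot} together with a direct residue computation.

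For (i), I would fix $\mathfrak{p}\in ram(\alpha)$ and pass to the complete discrete valuation ring $B=\widehat{(A_x)_\mathfrak{p}}$, with fraction field $K_\mathfrak{p}$ and residue field $\kappa(\mathfrak{p})$. Since $A_x\cong k[[u,v]]$, the quotient $A_x/\mathfrak{p}$ is a one-dimensional complete local domain with algebraically closed residue field $k$, so $\kappa(\mathfrak{p})$ is a finite extension of $k((t))$; in particular, it has $3$-cohomological dimension at most $1$, which is the key hypothesis of Proposition \ref{cdcond1}. I would then verify the remaining hypotheses of that proposition for $\mathcal{Y}_B\to\mathrm{Spec}\,B$: the generic fiber $Y_{K_\mathfrak{p}}$ is smooth by flat base change, the special fiber is canonically identified with $\mathcal{Y}_\mathfrak{p}$ (reduced by assumption), and the given birational morphism $Y\to Y'$ base-changes to a birational morphism $Y_{K_\mathfrak{p}}\to Y'_{K_\mathfrak{p}}$. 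The only non-formal item is that $Y'_{K_\mathfrak{p}}$ remains non-split, equivalently that $\alpha|_{K_\mathfrak{p}}\neq 0$; but the residue map factors through the completion, so $\partial_\mathfrak{p}(\alpha)\neq 0$ forces $\alpha|_{K_\mathfrak{p}}\neq 0$. Proposition \ref{cdcond1}(i) then yields both the geometric description of $\mathcal{Y}_\mathfrak{p}$ and the identity $\partial_\mathfrak{p}(\alpha)=\xi_\mathfrak{p}^{\pm 1}$.

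For (ii), first note that $\alpha\neq 0$ in $H^2(K_x,\mu_3)$ since $Y'$ is non-split. As $ram(\alpha)$ is SNC by assumption, Lemma \ref{hot} forces $\alpha=\pm(\pi_1,\pi_2)$ for a regular system of parameters $\pi_1,\pi_2$ of $A_x$. The residue formula (\ref{resdeg2}) then pins down the ramification locus: at any height-one prime $\mathfrak{q}$ distinct from $(\pi_1)$ and $(\pi_2)$ one has $\mathfrak{q}\not\ni\pi_i$ (since $A_x$ is a regular UFD and $(\pi_i)$ is the unique height-one prime containing $\pi_i$), so both $\pi_1$ and $\pi_2$ are units at $\mathfrak{q}$ and the residue of the symbol is trivial; at $(\pi_1)$, the image of $\pi_2$ is a uniformizer of $\kappa((\pi_1))$, so the residue equals $\overline{\pi_2^{\mp 1}}$, which is of order $3$ modulo cubes, and symmetrically at $(\pi_2)$. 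Hence $ram(\alpha)=\{(\pi_1),(\pi_2)\}$.

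The main technical obstacle is the reduction in (i): ensuring that completion at $\mathfrak{p}$ preserves all the data feeding into Proposition \ref{cdcond1}, most importantly non-splitness of the Severi-Brauer surface after base change, which is exactly what restricting to $\mathfrak{p}\in ram(\alpha)$ buys us. Everything else is a matter of matching definitions, and (ii) is an essentially immediate consequence of Lemma \ref{hot}.
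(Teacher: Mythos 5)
Your proposal is correct and follows essentially the same route as the paper: part (i) is proved by applying Proposition \ref{cdcond1} to the completed localisation of $A_x$ at $\mathfrak{p}$, with the key observation that $\partial_{\mathfrak{p}}(\alpha)\neq 0$ forces $\alpha$ to remain nonzero (hence $Y'$ non-split) over $K_{\mathfrak{p}}$, and part (ii) follows from Lemma \ref{hot}. You simply fill in more of the routine verifications (the computation of $cd(\kappa(\mathfrak{p}))$ and the explicit residues of the symbol $(\pi_1,\pi_2)$) that the paper leaves implicit.
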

\begin{proof}
For $(i)$, we apply Proposition \ref{cdcond1} to the (completion of) localisation of $A_x$ at $\mathfrak{p}\in ram(\alpha)$ (note that since $\alpha$ is ramified at $\mathfrak p$, it remains nonzero). 
For $(ii)$ we apply Lemma \ref{hot}.
\end{proof}

\begin{rema}\label{unicity}
From codimension one purity for $A_x$ (see \cite[Theorem 3.8.3]{CT}) and the fact that $H^2_{\acute{e}t}(A_x, \mathbb Z/3\mathbb Z)=0$, one has  that $\alpha$ is uniquely determined by the set of residues $\{\partial_{\mathfrak{p}}(\alpha)\}$ for $\mathfrak{p}\in ram(\alpha)$.
\end{rema}

\subsection{A general formula for fibrations in cubic surfaces}

 For simplicity in this section we assume that $k$ is of characteristic zero.

\begin{thm}\label{formula}
Let $k$ be an algebraically closed field of characteristic zero, let $S$ be a smooth projective rational surface over $k$, and let $\pi:X\to S$ be a cubic surface bundle with discriminant divisor $\Delta\subset S$. Let $K$ be the function field of $S$. Assume that the generic fibre $X_K$ is a smooth minimal cubic surface and that the fibres of $\pi$ over codimension $1$ points of $S$ are reduced.  Let $C=\cup_{i=1}^n C_i\subset \Delta$ be a divisor corresponding to the set of codimension $1$ points of $S$ over which the fibre of $\pi$ is geometrically a union of three planes permuted cyclically by Galois, with $\gamma_i\in \kappa(C_i)^*/(\kappa(C_i)^*)^3$ the class corresponding to the cyclic extension.
 Assume that $C$ is a simple normal crossings divisor on $S$. 
Then 
 $$H^2_{nr, \pi}(k(X)/k, \mathbb Z/3\mathbb Z)\subset (\mathbb Z/3\mathbb Z)^n$$
 is the subgroup of elements $\underline a=\{a_i\}_{i=1}^n$, $a_i\in \{-1,0,1\}$ satisfying the following properties:
 \begin{itemize}
\item[(i)] if $a_i\neq 0$, the base change $X_{K_{x_i}}$ of the surface $X_K$ to the completion $K_{x_i}$ of $K$ at the generic point $x_i$ of $C_i$ is not minimal: there is a birational morphism $X_{K_{x_i}}\to X'_i$ such that $X'_i$ is a non-split Severi-Brauer surface over $K_{x_i}$;
\item[(ii)] the sum $$\sum_{i=1}^n\sum_{P\in S^{(2)}} \partial_P(\gamma_i^{a_i})=0$$ is zero, and for every point  $P\in C_i\cap C_j$  such that $\partial_P(\gamma_i^{a_i})=-\partial_P(\gamma_j^{a_j})\neq 0$, one has that  the base change $X_{K_{P}}$ of the surface $X_K$ to the field of fractions $K_P$ of the completed local ring $\widehat{\mathcal{O}}_{S,P}$  is not minimal: there is a birational morphism $X_{K_P}\to X'_P$ such that $X'_P$ is a non-split Severi-Brauer surface over $K_P$. 
\end{itemize}

\end{thm}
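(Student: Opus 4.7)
The plan is to define a residue map $\rho \colon H^2_{nr,\pi}(k(X)/k, \mathbb Z/3\mathbb Z) \to (\mathbb Z/3\mathbb Z)^n$, $\alpha' \mapsto (a_i)$, show it is injective, and identify its image with the set characterized by (i)--(ii). Given $\alpha' \in H^2_{nr,\pi}$, I first lift it uniquely to $\alpha \in H^2(K, \mathbb Z/3\mathbb Z)$: existence is by the definition of $H^2_{nr,\pi}$, and uniqueness follows from the minimality of $X_K$ combined with Proposition \ref{relbrc}, which yields $\mathrm{Ker}[H^2(K, \mathbb Z/3\mathbb Z) \to H^2(k(X), \mathbb Z/3\mathbb Z)] = 0$. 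For each $x \in S^{(1)}$, I then analyze $\partial_x(\alpha)$: since $\alpha$ becomes trivial in $H^2(K_x(X_K), \mathbb Z/3\mathbb Z)$, Proposition \ref{cdcond1} forces $\partial_x(\alpha) = 0$ unless $x$ is the generic point of some $C_i$, in which case $\partial_{x_i}(\alpha) = \pm\gamma_i$, yielding $a_i \in \{-1, 0, 1\}$. Injectivity of $\rho$ follows from the Bloch--Ogus/Gersten injection $H^2(K, \mathbb Z/3\mathbb Z) \hookrightarrow \bigoplus_{C \in S^{(1)}} H^1(\kappa(C), \mathbb Z/3\mathbb Z)$, which holds because $H^2_{nr}(K/k, \mathbb Z/3\mathbb Z) = 0$ by birational invariance and rationality of $S$.

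To establish necessity, condition (i) follows from Proposition \ref{relbrc} applied over $K_{x_i}$ whenever $a_i \neq 0$: the nonzero class $\alpha_{K_{x_i}}$ in $\mathrm{Ker}[H^2(K_{x_i}, \mathbb Z/3\mathbb Z) \to H^2(K_{x_i}(X_K), \mathbb Z/3\mathbb Z)]$ forces the birational morphism to a non-split Severi--Brauer surface. The first part of (ii) is the cocycle relation from the Bloch--Ogus complex, requiring that at each codimension-$2$ point $P$ the local sum $\sum_{i:\, P \in C_i} \partial_P(\gamma_i^{a_i}) = 0$. For the second part, at a point $P \in C_i \cap C_j$ with $\partial_P(\gamma_i^{a_i}) = -\partial_P(\gamma_j^{a_j}) \neq 0$, the class $\alpha_{K_P} \in H^2(K_P, \mathbb Z/3\mathbb Z)$ has simple normal crossings ramification at two primes and is nonzero by Lemma \ref{hot} and Proposition \ref{cdcond}(ii); combined with its vanishing in $H^2(K_P(X_K), \mathbb Z/3\mathbb Z)$, Proposition \ref{relbrc} over $K_P$ yields the required Severi--Brauer morphism.

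For sufficiency, the first part of (ii) is exactly the cocycle condition ensuring existence of a unique $\alpha \in H^2(K, \mathbb Z/3\mathbb Z)$ with $\partial_{C_i}(\alpha) = \gamma_i^{a_i}$ and $\partial_C(\alpha) = 0$ for $C \notin \{C_1, \dots, C_n\}$, via the Bloch--Ogus sequence for $S$ rational over algebraically closed $k$. I then verify that $\alpha$ vanishes in $H^2(K_x(X_K), \mathbb Z/3\mathbb Z)$ for every $x$ of positive codimension. For $x \in S^{(1)}$ outside the support of the residues, $\alpha_{K_x}$ lifts from $H^2_{\acute{e}t}(\widehat{\mathcal O}_{S,x}, \mathbb Z/3\mathbb Z) = H^2(\kappa(x), \mathbb Z/3\mathbb Z) = 0$ (since $\mathrm{cd}(\kappa(x)) \leq 1$). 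For $x$ the generic point of $C_i$ with $a_i \neq 0$, condition (i) together with Proposition \ref{relbrc} and the uniqueness in Proposition \ref{cdcond1}(ii) identify $\alpha_{K_{x_i}}$ with the Severi--Brauer class, which vanishes over $K_{x_i}(X_K)$. For $P \in S^{(2)}$, the snc hypothesis and (ii) produce two subcases: either $\alpha_{K_P}$ is unramified on $\widehat{\mathcal O}_{S,P}$ (hence zero, since $H^2_{\acute{e}t}(\widehat{\mathcal O}_{S,P}, \mathbb Z/3\mathbb Z) = H^2(k, \mathbb Z/3\mathbb Z) = 0$), or $P$ is an intersection point handled by the second part of (ii) via Proposition \ref{relbrc} over $K_P$.

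The main difficulty will be the codimension-$2$ analysis: verifying that at a smooth point $P$ of a single $C_i$ with $\partial_P(\gamma_i^{a_i}) = 0$, Hensel's lemma and the algebraic closedness of $\kappa(P) = k$ imply $\gamma_i^{a_i}$ becomes a cube in the completion of $\kappa(C_i)$ at $P$, so that $\alpha_{K_P}$ is genuinely unramified on $\widehat{\mathcal O}_{S,P}$; and confirming, at snc intersection points $P \in C_i \cap C_j$ with cancelling nonzero residues, that the Severi--Brauer class produced by Proposition \ref{relbrc} over $K_P$ coincides with $\alpha_{K_P}$, using the uniqueness statements of Proposition \ref{cdcond} and Remark \ref{unicity}.
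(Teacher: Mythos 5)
Your proposal is correct and follows essentially the same route as the paper's proof: unique lifting to $H^2(K,\mathbb Z/3\mathbb Z)$ via minimality and Proposition \ref{relbrc}, identification of the residues through Propositions \ref{cdcond1} and \ref{relbrc}, the Bloch--Ogus exact sequence for the rational surface $S$ in both directions, and the codimension-$2$ analysis via Lemma \ref{hot}, Proposition \ref{cdcond} and Remark \ref{unicity} to match $\alpha_{K_P}$ with the Severi--Brauer class. The only differences are cosmetic (packaging the argument as a residue map $\rho$ and treating necessity before sufficiency), and the ``main difficulty'' you flag is exactly the step the paper resolves the same way.
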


\begin{proof}
We first check that any vector $\underline a=\{a_i\}_{i=1}^n$ satisfying properties $(i)$ and $(ii)$ gives a class $\alpha\in H^2_{nr, \pi}(k(X)/k, \mathbb Z/3\mathbb Z)$.

Since $S$ is a smooth projective rational surface, the Bloch-Ogus complex
$$0\to H^2(K,\mathbb Z/3\mathbb Z) \stackrel{\oplus \partial^2}{\to}  \oplus_{x\in S^{(1)}} H^1(\kappa(x),\mathbb Z/3\mathbb Z) \stackrel{\oplus \partial^1}{\to} \oplus_{P\in S^{(2)}} H^0(\kappa(P),\mathbb Z/3\mathbb Z)$$
is exact (see for example \cite[Thm.1]{AM} ). 
Hence there is a unique class $\alpha\in H^2(K,\mathbb Z/3\mathbb Z)$ such that the ramification divisor of $\alpha$ is $\sum_{i} |a_i|C_i$, and $\partial_{C_i}(\alpha)=\gamma_i^{a_i}$, $1\leq i\leq n$.

Let $\alpha'$ be the image of $\alpha$ in $H^2(k(X), \mathbb Z/3\mathbb Z)$. We show that $\alpha'$ is a nonzero element in $H^2_{nr,\pi}$.  First, since $X_K$ is minimal, the map $$H^2(K,\mathbb Z/3\mathbb Z)\to H^2(K(X_K),\mathbb Z/3\mathbb Z)= H^2(k(X),\mathbb Z/3\mathbb Z)$$ is injective by Proposition \ref{relbrc}, hence $\alpha'$ is nonzero. Let $x\in S$ be a point of codimension $1$ or $2$. Let $\alpha_x$ be the image of $\alpha$ in $H^2(K_{x}, \mathbb Z/3\mathbb Z)$. We have two cases:

\begin{enumerate}
\item Assume that $x$ is the generic point of a curve $D\subset S$:
\begin{enumerate}
\item  If $D$ is not one of the ramification curves $C_i$ of $\alpha$, then $\partial_{D}(\alpha)=0$ and the image $\alpha_x$ of $\alpha$ in  $H^2(K_{x}, \mathbb Z/3\mathbb Z)$ comes from an element in the cohomology of the completed local ring $H^2_{\acute{e}t}(\hat{\mathcal O}_{S, x}, \mathbb Z/3\mathbb Z)$ (see (\ref{gersten})), but $H^2_{\acute{e}t}(\hat{\mathcal O}_{S, x}, \mathbb Z/3\mathbb Z)=H^2(\kappa(D), \mathbb Z/3\mathbb Z)=0$  by  cohomological dimension.  
\item Assume now $D$ is one of the ramification curves $C_i$ of $\alpha$. By the definition of $\alpha$ and Proposition  \ref{cdcond1} one has that $\alpha_x$ or $-\alpha_x$  equals to the class of the Severi-Brauer surface $X_i'$, hence it is zero in $K_{x}(X_K)$ by Proposition \ref{relbrc}.
\end{enumerate}

\item Assume that $x=P$ is a closed point of $S$. Since $ram(\alpha_x)$ is a simple normal crossings divisor by construction we can apply Lemma \ref{hot}: either $\alpha_x$ is zero, or $\alpha_x=\pm (\pi_i, \pi_j)$ where $\pi_i$ and $\pi_j$ are local parameters of the ramification curves $C_i$ and $C_j$.  In the latter case one has  $\partial_{P}(\partial_{C_i}(\alpha_x))=\pm 1\neq 0$. Hence by  assumption  there is a birational morphism $X_{K_P}\to X'_P$ such that $X'_P$ is a non-split Severi-Brauer surface over $K_P$. Let $\beta$ be the class of this surface. Since $\beta$ is nonzero and the fibres of $X\to S$ are geometrically unions of three planes permuted by Galois only at $C_i$ and $C_j$, but not at other curves passing by $P$, by Proposition \ref{cdcond} one has that $\beta$ ramifies exactly at $C_i$ and $C_j$ and the residues of $\beta$ are $\pm \gamma_i, \pm\gamma_j$. Since in addition $\partial_{P}(\partial_{C_i}(\beta))+ \partial_{P}(\partial_{C_j}(\beta))=0$, one deduces that $\beta$ and $\alpha_x$ have the same or opposite residues, so that, by remark \ref{unicity}, one has that $\beta=\pm \alpha_x$. Hence $\alpha_x$ is zero in $K_P(X_K)$ by Proposition  $\ref{relbrc}$ again.

\end{enumerate}

Hence a vector $\underline{a}$ satisfying the assumptions gives a nonzero class in the group $H^2_{nr,\pi}(k(X)/k, \mathbb Z/3\mathbb Z)$.

For the converse, let $\alpha'\in H^2_{nr,\pi}(k(X)/k, \mathbb Z/3\mathbb Z)$. Then $\alpha'$ is the image of an element $\alpha\in H^2(K, \mathbb Z/3\mathbb Z)$. Let  $D\in ram(\alpha)$ and let $x$ be the generic point of $D$. By assumption, the image of $\alpha$ in $H^2(K_{x}(X_K),\mathbb Z/3\mathbb Z)$ is zero. Hence, by Proposition \ref{relbrc}, there is a birational map $X_{K_{x}}\to X'$ such that $X'$ is a non-split Severi-Brauer surface. By Proposition \ref{cdcond1}, one has that $D$ is one of the curves $C_i$, and $\partial_{C_i}(\alpha)=\gamma_i$ or $\gamma_i^{-1}$. We then define $a_i=1$ or $-1$ if, respectively, $\partial_{C_i}(\alpha)=\gamma_i$ or $\gamma_i^{-1}$  and we obtain the property  $(i)$.  

From the Bloch-Ogus complex in the beginning of the proof we deduce that $$\sum_{i=1}^n\sum_{P\in S^{(2)}} \partial_P(\gamma_i^{a_i})=0.$$ Let  $P\in C_i\cap C_j$  be such that $\partial_P(\gamma_i^{a_i})=-\partial_P(\gamma_j^{a_j})\neq 0$. 
In particular, the image of $\alpha$ in $H^2(K_P, \mathbb Z/3\mathbb Z)$ is not zero, and it is in the kernel of the map $$H^2(K_P, \mathbb Z/3\mathbb Z)\to H^2(K_P(X_K), \mathbb Z/3\mathbb Z)$$ since $\alpha'$ is in $H^2_{nr,\pi}$.  Then $(ii)$ follows from Proposition \ref{relbrc} again.

\end{proof}

The above formula allows to construct unramified classes effectively, given a cubic surface bundle $X\to S$:
\begin{enumerate}
\item  we check that $X_K$ is a minimal cubic surface, we determine the discriminant locus $\Delta$ (see e.g. \cite{E} for an explicit formula), and we verify that the fibres of $\pi$ over generic points of $\Delta$ are reduced;
\item  we determine $C\subset \Delta$ corresponding to codimension one points of $S$ over which the fibre of $\pi$ is geometrically a union of three planes permuted by Galois, we check that $C$ is a simple normal crossings divisor, and we record the corresponding classes $\gamma_i$;
\item we determine for which irreducible curves $C_i$  in $C$ the surface $X_{K_i}$ is not minimal, and $K_i$-birational to a Severi-Brauer surface, we call $C'\subset C$ the corresponding divisor; 
 \item we keep linear combinations $\sum a_i C_i$ with $C_i$ in $C'$ and $a_i\in \{-1,0,1\}$  satisfying condition $(ii)$ of Theorem \ref{formula}.
\end{enumerate}

\

{\footnotesize
\noindent \textsc{Alena Pirutka, Courant Institute of Mathematical Sciences, \newline
New York University, 
New York, U.S.A.}
\newline
\textit{pirutka@cims.nyu.edu}
}

\end{document}